\newcommand{\F}{{\mathbb{F}}}
\newcommand{\Z}{{\mathbb{Z}}}
\renewcommand{\mod}{\;\mathrm{mod}\;}
\newcommand{\Imm}{\mathrm{Im}\,}
\newcommand{\ea}{\mathfrak{a}}
\newcommand{\eb}{\mathfrak{b}}
\newcommand{\silo}{\xrightarrow{\sim}}
\newcommand{\verk}{\mbox{\scriptsize $\,\circ\,$}}
\newtheorem{theorem}{Theorem}
\newtheorem{lemma}[theorem]{Lemma}
\newtheorem{proposition}[theorem]{Proposition}
\newtheorem{cor}[theorem]{Corollary}
\newtheorem{remark}[theorem]{Remark}
\newenvironment{rem}{\noindent {\bf Remark.}}{}
\newenvironment{rems}{\noindent {\bf Remarks.}}{}
\newcommand{\beweisende}{\hspace*{\fill} $\Box$}
\begin{document}
\title{An alternative to Witt vectors}
\author{Joachim Cuntz and Christopher Deninger}
\address{J. Cuntz, Mathematisches Institut, Einsteinstr.62, 48149 M\"unster, Germany\newline
C. Deninger, Mathematisches Institut, Einsteinstr. 62, 48149 M\"unster, Germany}
\thanks{Research supported by DFG through CRC 878 and by ERC through AdG 267079}
\begin{abstract}\noindent
The ring of Witt vectors associated to a ring R is a classical tool in algebra. We introduce a ring $C(R)$ which is more easily constructed and which is isomorphic to the Witt ring $W(R)$ for a perfect $\F_p$-algebra $R$. It is obtained as the completion of the monoid ring $\Z R$, for the multiplicative monoid $R$, with respect to the powers of the kernel of the natural map $\Z R\to R$.

\end{abstract}
\subjclass[2010]{Primary: 13F35, 13J10} 
\keywords{Witt vectors, strict $p$-ring, perfect $\F_p$-algebra}
\dedicatory{Dedicated to our friend and colleague Peter Schneider}
\maketitle

\section{Introduction}
Since the work of Witt on discretely valued fields with given perfect residue field in \cite{W} the ``vectors'' that carry his name have become important in many branches of mathematics. In \cite{L1}, \cite{L2} Lazard gave a new approach to Witt vectors generalizing the theory to the case of a perfect $\F_p$-residue algebra. This approach is the one used in Serre \cite{S} for example. Addition and multiplication of Witt vectors are defined by certain universal polynomials. This description is cumbersome. While thinking about periodic cyclic cohomology for $\F_p$-algebras we found an alternative $C (R)$ to the $p$-typical Witt ring $W (R)$ of a perfect $\F_p$-algebra $R$. The rings $C (R)$ and $W (R)$ are canonically isomorphic but the construction of $C (R)$ as a completion (hence the name) of a monoid algebra $\Z R$ is much simpler. We have therefore made an effort to develop the properties of $C (R)$ independently of the theory of $W (R)$. 

Using the approach in \cite{Cu-Qu1}, \cite{Cu-Qu2} we can define periodic 
cyclic homology for a ring $R$ using completed extensions by free 
(noncommutative) $\Z$-algebras. If one applies this procedure to an 
$\F_p$-algebra $R$, the completion $C(R)$ of the free $\Z$-module $\Z R$ 
appears as a natural intermediate step. 

If the $\F_p$-algebra $R$ is not perfect, $C(R)$ is still defined and different from $W (R)$. However a somewhat more involved construction in the same spirit does give $W (R)$ in general. We will address this in a subsequent paper together with applications.

In this note all rings are commutative with $1$ and all ring homomorphisms map $1$ to $1$. The background reference is \cite{S} II \S\,4--\S\,6.
\section{Construction and properties of $C (R)$}
A $p$-ring $A$ is a commutative ring with unit which is Hausdorff and complete for the topology defined by a sequence of ideals $\ea_1 \supset \ea_2 \supset \ldots$ with the following properties:\\
{1)} $\ea_i \ea_j \subset \ea_{i+j}$ for $i,j \ge 1$\\
{2)} $A / \ea_1 = R$ is a perfect $\F_p$-algebra, i.e. the Frobenius homomorphism $x \mapsto x^p$ is an isomorphism of $R$.

For a $p$-ring we have $p \in \ea_1$ and hence $\ea_i \supset p^iA$. A $p$-ring is called strict if $\ea_i = p^iA$ and if $p$ is not a zero divisor in $A$. It is known that for every perfect $\F_p$-algebra $R$ there is a strict $p$-ring $A = W (R)$ with $A / p A = R$. The pair $(W (R) , W (R) \to R)$ is unique up to a unique isomorphism.


View $R$ as a monoid under multiplication  and let $\Z R$ be the monoid algebra of $(R , \cdot)$. Its elements are formal sums of the form $\sum_{r \in R} n_r [r]$ with almost all $n_r = 0$. Addition and multiplication are the obvious ones. Note that $[1] = 1$ but $[0] \neq 0$. Multiplicative maps $R \to B$ into commutative rings mapping $1$ to $1$ correspond to ring homomorphisms $\Z R \to B$. The identity map $R = R$ induces the surjective ring homomorphism $\pi : \Z R \to R$ which sends $\sum n_r [r]$ to $\sum n_r r$. Let $I$ be its kernel, so that we have an exact sequence
\[
0 \longrightarrow I \longrightarrow \Z R \xrightarrow{\pi} R \longrightarrow 0 \; .
\]
It is not difficult to see that as a $\Z$-module $I$ is generated by elements of the form $[r] + [s] - [r+s]$ for $r,s \in R$. We will not use this fact in the sequel.
The multiplicative isomorphism $r \mapsto r^p$ of $R$ induces a ring isomorphism $F : \Z R \to \Z R$ mapping $\sum n_r [r]$ to $\sum n_r [r^p]$. It satisfies $F (I) = I$.

Let $C (R) = \varprojlim_{\nu} \Z R / I^{\nu}$ be the $I$-adic completion of $\Z R$. By construction $C (R)$ is Hausdorff and complete for the topology defined by the ideals $\ea_i = \widehat{I^i}$ where
\[
\widehat{I^i} = \varprojlim_{\nu} I^i / I^{\nu} \subset C (R)\; .
\]
Note that at this stage we do not know that $\widehat{I^i} = \hat{I}^i$ since $\Z R$ is not Noetherian in general. Condition {1)} above is satisfied and {2)} as well since
\[
C (R) / \hat{I} = \Z R / I = R \; .
\]
Hence $C (R)$ is a $p$-ring. The construction of $C (R)$ is functorial in $R$.

\begin{theorem}
\label{t1}
Let $R$ be a perfect $\F_p$-algebra. Then $C (R)$ is a strict $p$-ring with $C (R) / p C (R) = R$.
\end{theorem}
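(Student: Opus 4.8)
The plan is to construct the comparison homomorphism with the classical Witt ring and to prove it is an isomorphism by reducing everything to the associated graded rings; strictness of $C(R)$ is then inherited from that of $W(R)$. Since $W(R)$ is a strict $p$-ring with Teichm\"uller map $\tau : R \to W(R)$, the multiplicative map $r \mapsto \tau(r)$ extends, by the universal property of the monoid algebra, to a ring homomorphism $\Phi : \Z R \to W(R)$, $\sum n_r [r] \mapsto \sum n_r \tau(r)$. Because $\tau(r)$ reduces to $r$ modulo $p$, the composite $\Z R \xrightarrow{\Phi} W(R) \to R$ equals $\pi$, so $\Phi(I) \subseteq pW(R)$ and hence $\Phi(I^n) \subseteq p^n W(R)$. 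Thus $\Phi$ is continuous for the $I$-adic and $p$-adic topologies and extends to a continuous homomorphism $\hat\Phi : C(R) \to W(R)$ with $\hat\Phi(\ea_n) \subseteq p^n W(R)$.

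First I would identify the graded pieces. Since $C(R)/\widehat{I^{m}} = \Z R / I^{m}$, one gets $\ea_n/\ea_{n+1} = I^n/I^{n+1}$. Multiplication by $p^n$ defines a map $\phi_n : R \to I^n/I^{n+1}$, $\bar a \mapsto p^n a \bmod I^{n+1}$ (well defined because $p^n \in I^n$), and because $\hat\Phi(p^n) = p^n$ has image a generator of $p^n W(R)/p^{n+1}W(R) \cong R$, a direct check gives $\mathrm{gr}(\hat\Phi)_n \circ \phi_n = \mathrm{id}_R$. Consequently $\phi_n$ is injective and $\mathrm{gr}(\hat\Phi)_n$ is surjective for every $n$; to finish this step it remains to see that $\phi_n$ is also surjective, i.e. that
\[
I^n = p^n \Z R + I^{n+1} \qquad (n \ge 1).
\]

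The main obstacle is exactly this surjectivity, and it is where perfectness of $R$ must enter. Here I would pass to $\Z R/p\,\Z R = \F_p R$, the monoid algebra over $\F_p$, in which the Frobenius coincides with the map induced by $F$ (by the freshman's dream together with $n^p \equiv n$), and is therefore bijective; it preserves the augmentation ideal $\bar I = \ker(\F_p R \to R)$. Hence every element of $\bar I$ is the $p$-th power of an element of $\bar I$, which forces $\bar I = \bar I^{p} \subseteq \bar I^2$, so $\bar I = \bar I^2$. Unwinding this over $\Z R$ gives $I = p\,\Z R + I^2$, and a short induction using $p^n \in I^n$ upgrades it to $I^n = p^n \Z R + I^{n+1}$, the desired surjectivity of $\phi_n$.

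It then follows that $\mathrm{gr}(\hat\Phi)$ is an isomorphism in every degree. Since both $C(R)$ and $W(R)$ are complete and Hausdorff for their respective filtrations $\ea_n$ and $p^n W(R)$, the standard comparison argument on associated graded rings (no Noetherian hypothesis needed) shows that $\hat\Phi$ is an isomorphism of filtered rings with $\hat\Phi(\ea_n) = p^n W(R)$. Transporting the strict structure through $\hat\Phi$ then yields at once that $p$ is not a zero divisor in $C(R)$, that $\ea_n = \widehat{I^n} = p^n C(R)$, and that $C(R)/pC(R) \cong W(R)/pW(R) = R$, so that $C(R)$ is a strict $p$-ring.
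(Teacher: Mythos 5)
Your argument is correct, but it follows a genuinely different route from the paper's proof. The paper proves Theorem \ref{t1} in a self-contained way, without ever invoking the existence of the classical Witt ring: its tools are the arithmetic derivation $\delta(x)=\frac{1}{p}(F(x)-x^p)$, Lemma \ref{t2} (part a): $pa\in I^n$ implies $a\in I^{n-1}$; part b): $I^n=I^\nu+p^n\Z R$ for $\nu\ge n$), and a Mittag--Leffler argument on the systems $N_\nu=p^{-n}(I^\nu)/I^\nu$, which yields at once that multiplication by $p^n$ is injective on $C(R)$ with image $\widehat{I^n}$. You instead assume the classical fact that a strict $p$-ring $W(R)$ with residue algebra $R$ exists, together with its Teichm\"uller section, construct the comparison map $\hat\Phi:C(R)\to W(R)$, and check that it is an isomorphism on associated graded pieces. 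Your key lemma $I^n=p^n\Z R+I^{n+1}$, proved from perfectness of $\F_p R=\Z R/p\Z R$, is a one-step cousin of the paper's Lemma \ref{t2} b) (whose base case uses exactly the same idea, $x\equiv F^{-\nu}(x)^{p^\nu}\bmod p\Z R$), and you never need an analogue of Lemma \ref{t2} a), because injectivity of $p$ and strictness are imported from $W(R)$; both your lemma and the filtered-isomorphism argument (which indeed requires no Noetherian hypotheses, only completeness and Hausdorffness of both filtrations) are sound. The trade-off is this: the paper's proof makes $C(R)$ an independent construction --- it re-proves the existence of strict $p$-rings, which is the point of the title ``an alternative to Witt vectors'' --- whereas your proof presupposes Witt theory and so cannot serve that purpose; on the other hand, granting $W(R)$, the paper's own Remark \ref{t5} obtains $C(R)\cong W(R)$ even more cheaply than you do, purely formally from the universal property of strict $p$-rings (\cite{S} II \S\,5, Proposition 10) combined with Proposition \ref{t4}, with no graded computation at all. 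So your argument sits strictly between Remark \ref{t5} and the self-contained proof: more work than the former, less than the latter, and valid as a proof of the stated theorem.
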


The result is an immediate consequence of the universal properties shared by $C (R)$ and any strict $p$-ring with residue algebra $R$, once one knows that such a strict $p$-ring exists; see remark \ref{t5} below. In the following we give a self-contained proof of theorem \ref{t1} which does not use this information.

Consider the ``arithmetic derivation'' $\delta : \Z R \to \Z R$ defined by the formula
\[
\delta (x) = \frac{1}{p} (F (x) - x^{p}) \; .
\]
It is well defined since $F (x) \equiv x^{p} \mod p \Z R$ and since $\Z R$ being a free $\Z$-module has no $\Z$-torsion. The following formulas for $x,y \in \Z R$ are immediate
\begin{equation}
\label{eq:1}
\delta (x+y) = \delta (x) + \delta (y) - \sum^{p-1}_{\nu=1} \frac{1}{p} {p \choose \nu} x^{\nu} y^{p-\nu}
\end{equation}
and
\begin{equation}
\label{eq:2}
\delta (xy) = \delta (x) F (y) + x^{p} \delta (y) \; .
\end{equation}
Applying \eqref{eq:2} inductively gives the relation
\begin{equation}
\label{eq:3}
\delta (x_1 \cdots x_n) = \sum^n_{\nu =1} x^{p}_1 \cdots x^{p}_{\nu-1} \delta (x_{\nu}) F (x_{\nu + 1}) \cdots F (x_n) \quad \text{for} \; x_i \in \Z R \; .
\end{equation}
Equation \eqref{eq:1} shows that we have
\begin{equation}
\label{eq:4}
\delta (x+y) \equiv \delta (x) + \delta (y) \mod I^n \quad \text{if $x$ or $y$ is in $I^n$} \; .
\end{equation}
Together with \eqref{eq:3} it follows that
\begin{equation}
\label{eq:5}
\delta (I^n) \subset I^{n-1} \quad \text{for} \; n \ge 1 \; .
\end{equation}

\begin{lemma}
\label{t2}
Let $R$ be a perfect $\F_p$-algebra and $n \ge 1$ an integer. \\
a) If $pa \in I^n$ for some $a \in \Z R$ then $a \in I^{n-1}$.\\
b) $I^n = I^{\nu} + p^n \Z R$ for any $\nu \ge n$.
\end{lemma}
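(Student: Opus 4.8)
The plan is to treat the two assertions separately; both are driven by the arithmetic derivation $\delta$ and by the fact that $F$ is a ring automorphism of $\Z R$ with $F(I)=I$, hence restricts to a bijection of each power $I^n$ onto itself. Perfectness of $R$ enters only through the surjectivity of $F$. Throughout I will use that $p=p[1]\in I$, since $\pi(p[1])=p\cdot 1_R=0$ in the $\F_p$-algebra $R$.

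For part a) I would argue by induction on $n$, the case $n=1$ being vacuous as $I^0=\Z R$. Assume the statement for $n-1$ and let $pa\in I^n\subseteq I^{n-1}$; the inductive hypothesis already gives $a\in I^{n-2}$. The key computation is that $F$ is $\Z$-linear, so $F(pa)=pF(a)$ and $(pa)^p=p^pa^p$, whence
\[
\delta(pa)=\tfrac1p\bigl(F(pa)-(pa)^p\bigr)=F(a)-p^{p-1}a^p .
\]
By \eqref{eq:5} the left-hand side lies in $I^{n-1}$. On the other hand $a\in I^{n-2}$ forces $a^p\in I^{p(n-2)}$, and $p^{p-1}\in I^{p-1}$, so $p^{p-1}a^p\in I^{p(n-1)-1}\subseteq I^{n-1}$ (here $(p-1)(n-1)\ge1$). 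Subtracting, $F(a)\in I^{n-1}$, and applying $F^{-1}$ together with $F(I^{n-1})=I^{n-1}$ yields $a\in I^{n-1}$, as desired.

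For part b) the inclusion $I^\nu+p^n\Z R\subseteq I^n$ is immediate from $\nu\ge n$ and $p\in I$. For the reverse inclusion I would first settle $n=1$, i.e.\ $I\subseteq I^\nu+p\Z R$ for all $\nu$. The crucial point is the case $\nu=2$: given $x\in I$, surjectivity of $F$ on $I$ produces $y\in I$ with $F(y)=x$, and then
\[
x=F(y)=y^p+p\,\delta(y)\in I^p+p\Z R\subseteq I^2+p\Z R,
\]
since $y\in I$ gives $y^p\in I^p\subseteq I^2$. Thus $I=I^2+p\Z R$, and an easy induction on $\nu$—substituting $I=I^2+p\Z R$ into $I^\nu=I\cdot I^{\nu-1}$—upgrades this to $I=I^\nu+p\Z R$ for every $\nu\ge1$.

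The passage to general $n$ is then a bookkeeping induction on $n$. Writing $I^n=I^{n-1}\cdot I$ and substituting $I^{n-1}=I^\mu+p^{n-1}\Z R$ and $I=I^{\mu'}+p\Z R$, I expand the product as $I^{\mu+\mu'}+pI^\mu+p^{n-1}I^{\mu'}+p^n\Z R$; using $p\in I$ and $p^{n-1}\in I^{n-1}$, the first three summands lie in $I^{\nu}$ once $\mu,\mu'$ are chosen $\ge\nu$, leaving $I^n\subseteq I^\nu+p^n\Z R$. I expect the one genuinely nonroutine step to be the $F$-surjectivity trick in the base case of b): rewriting an arbitrary element of $I$ as $F(y)=y^p+p\,\delta(y)$ is exactly where a $p$-th power is traded for an element of $I^2$ plus a $p$-multiple, and it is the only place perfectness is used. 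Part a) is comparatively mechanical once the formula for $\delta(pa)$ is in hand. Notably, this route never invokes the (explicitly unused) description of $I$ via the generators $[r]+[s]-[r+s]$.
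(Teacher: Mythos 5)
Your proof is correct, and part b) takes a genuinely different route from the paper's. In a) you reproduce the paper's core computation---$\delta(pa)=F(a)-p^{p-1}a^{p}$, the containment \eqref{eq:5}, and $F(I^{n-1})=I^{n-1}$---but you bound $p^{p-1}a^{p}$ by an induction on $n$ (via $a\in I^{n-2}$ and $p\in I$), whereas the paper needs no induction at all: from the hypothesis itself, $p^{p-1}a^{p}=(pa)^{p-1}a\in I^{n(p-1)}\subseteq I^{n}$, so $F(a)\equiv\delta(pa)\bmod I^{n}$ in one step. The real divergence is in b): the paper's induction on $n$ writes $x=y+p^{n}z$ with $p^{n}z\in I^{n+1}$ and invokes part a) \emph{repeatedly} to conclude $z\in I$, so its b) depends on a); your induction never uses a). Indeed your argument can be compressed further: once the base case gives the equality of ideals $I=I^{\nu}+p\Z R$, pure ideal arithmetic finishes, since $I^{n}=(I^{\nu}+p\Z R)^{n}=\sum_{k=0}^{n}(I^{\nu})^{k}(p\Z R)^{n-k}\subseteq I^{\nu}+p^{n}\Z R$, every term with $k\ge 1$ lying in $I^{\nu}$ simply because $I^{\nu}$ is an ideal (for the same reason you do not actually need $p\in I$ or $p^{n-1}\in I^{n-1}$ to absorb your middle summands $pI^{\mu}$ and $p^{n-1}I^{\mu'}$). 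For the base case, your single application of $F$-surjectivity, $x=F(y)=y^{p}+p\,\delta(y)$, followed by the bootstrap from $I=I^{2}+p\Z R$, is essentially the paper's iterated congruence $x\equiv F^{-\nu}(x)^{p^{\nu}}\bmod p\Z R$ unrolled one Frobenius step at a time. What each approach buys: yours makes b) logically independent of a) and is shorter; the paper's keeps the Frobenius iteration explicit and, in any case, a) remains indispensable later (it drives the Mittag--Leffler argument in the proof of Theorem \ref{t1}). One cosmetic slip: your closing claim that perfectness enters only in b)'s base case is not quite accurate, since a) also uses bijectivity of $F$ (to apply $F^{-1}$); but both uses are covered by your standing hypothesis that $F$ is an automorphism with $F(I)=I$.
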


\begin{proof}
a) According to formula \eqref{eq:5} we have $\delta (pa) \in I^{n-1}$. On the other hand, by definition:
\[
\delta (pa) = F (a) - p^{p-1} a^{p} \; ,
\]
and therefore since $pa \in I^n$
\[
\delta (pa) \equiv F (a) \mod I^{n} \; .
\]
It follows that $F (a) \in I^{n-1}$ and hence $a \in I^{n-1}$ since $F$ is an automorphism with $F (I) = I$.\\
b) We prove the inclusion $I^n \subset I^{\nu} + p^n \Z R$ for $\nu \ge n$ by induction with respect to $n \ge 1$. The other inclusion is clear. For $y \in \Z R$ and $\nu \ge 1$ we have
\[
F^{\nu} (y) \equiv y^{p^{\nu}} \mod p \Z R \; .
\]
Applying this to $y = F^{-\nu} (x)$, we get for all $x \in \Z R$
\[
x \equiv F^{-\nu} (x)^{p^{\nu}} \mod p \Z R \; .
\]
For $x \in I$ this shows that $x \in I^{\nu} + p \Z R$ settling the case $n = 1$ of the assertion.\\
Now assume that $I^n \subset I^{\nu} + p^n \Z R$ has been shown for a given $n \ge 1$ and all $\nu \ge n$. Fix some $\nu \ge n+1$ and consider an element $x \in I^{n+1}$. By the induction assumption $x = y + p^n z$ with $y \in I^{\nu}$ and $z \in \Z R$. Hence $p^n z = x-y \in I^{n+1}$. Using assertion a) of the lemma repeatedly shows that $z \in I$. Hence $z \in I^{\nu} + p \Z R$ by the case $n = 1$. Writing $z = a + p b$ with $a \in I^{\nu}$ and $b \in \Z R$ we find
\[
x = (y + p^n a) + p^{n+1} b \in I^{\nu} + p^{n+1} \Z R \; .
\]
Thus we have shown the induction step $I^{n+1} \subset I^{\nu} + p^{n+1} \Z R$.
\end{proof}

After these preparations the {\it proof of theorem \ref{t1}} follows easily: We have to show that $p^n C (R) = \widehat{I^n}$ for all $n \ge 1$ and that $p$ is not a zero divisor in $C (R)$. Let $p^{-n} (I^{\nu})$ be the inverse image of $I^{\nu}$ under $p^n$-multiplication on $\Z R$. Then for any $\nu \ge n \ge 1$ we have an exact sequence where the surjectivity on the right is due to part b) of Lemma \ref{t2}:
\[
0 \longrightarrow p^{-n} (I^{\nu}) / I^{\nu} \longrightarrow \Z R / I^{\nu} \xrightarrow{p^n} I^n / I^{\nu} \longrightarrow 0 \; .
\]
From this we get an exact sequence of projective systems whose transition maps for $\nu \ge n$ are the reduction maps. Set $N_{\nu} = p^{-n} (I^{\nu}) / I^{\nu}$. Then we have an exact sequence
\[
0 \longrightarrow \varprojlim_{\nu} N_{\nu} \longrightarrow C (R) \xrightarrow{p^n} \widehat{I^n} \longrightarrow \varprojlim_{\nu}\,\!^{(1)} N_{\nu} \; .
\]
The transition map $N_{\nu + n} \to N_{\nu}$ is the zero map since $a \in p^{-n} (I^{\nu +n })$ implies $p^n a \in I^{\nu + n}$ and hence $a \in I^{\nu}$ by part a) of Lemma \ref{t2}. In particular $(N_{\nu})$ is Mittag--Leffler, so that $\varprojlim_{\nu}^{(1)} (N_{\nu}) = 0$. It is also clear now that $\varprojlim_{\nu} N_{\nu} = 0$. It follows that $p^n$-multiplication on $C (R)$ is injective with image $\widehat{I^n}$. Hence $p$ is not a zero-divisor and $\widehat{I^n} = p^n C (R)$. \beweisende

\begin{rems} {\bf 1)} If $R$ is a perfect $\F_p$-algebra there is an isomorphism
\[
R \silo I^n / I^{n+1} \quad \text{given by} \; r \longmapsto p^n [r] \; .
\]
This follows because:
\[
I^n / I^{n+1} = \widehat{I^n} / \widehat{I^{n+1}} = p^n C(R) / p^{n+1} C(R) \overset{p^{-n}}{=} C (R) / p C (R) = R \; .
\]
{\bf 2)} The automorphism $F$ of $\Z R$ satisfies $F (I) = I$. Hence it induces an automorphism $F$ of $C (R)$ which lifts the Frobenius automorphism of the perfect $\F_p$-algebra $R$. The Verschiebung $V : C (R) \to C (R)$ is the additive homomorphism defined by $V (x) = p F^{-1} (x)$. By definition $\Imm V^i = p^i C (R)$ and $V \verk F = F \verk V = p$. The projection $\pi : C (R) \to R$ has a multiplicative splitting defined as the composition $\omega : R \hookrightarrow \Z R \to C (R)$. Frobenius $F$, Verschiebung $V$ and Teichm\"uller lift $\omega$ are well known extra structures on rings of Witt vectors.
\end{rems}

\begin{proposition}
\label{t3}
If $R = K$ is a perfect field of characteristic $p$ then $C (K)$ is a discrete valuation ring of mixed characteristic with residue field $K$.
\end{proposition}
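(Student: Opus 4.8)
The plan is to derive the statement from Theorem \ref{t1}, which asserts that $C(K)$ is a strict $p$-ring with $C(K)/pC(K) = K$. By the definition of a strict $p$-ring this means that $p$ is not a zero divisor and that $C(K)$ is Hausdorff and complete for the $p$-adic topology, so that in particular $\bigcap_{n \ge 1} p^n C(K) = 0$. The point is that, $K$ now being a field rather than merely a perfect $\F_p$-algebra, these properties suffice to make $C(K)$ a complete discrete valuation ring.

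First I would show that $C(K)$ is local with maximal ideal $pC(K)$. Given $x \in C(K) \setminus pC(K)$, its image in the field $K = C(K)/pC(K)$ is nonzero, hence a unit, so there is $y \in C(K)$ with $xy \equiv 1 \mod pC(K)$, say $xy = 1 + pz$. Since $C(K)$ is $p$-adically complete, $1 + pz$ is invertible with inverse $\sum_{k \ge 0} (-pz)^k$; therefore $xy$, and with it $x$, is a unit. Thus $pC(K)$ contains every non-unit and $C(K)$ is local with maximal ideal $pC(K)$. This step, the lifting of invertibility from $K$ to $C(K)$, is the only place where $p$-adic completeness is essential, and I regard it as the crux of the argument.

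Next I would introduce the valuation. For nonzero $x \in C(K)$, the condition $\bigcap_n p^n C(K) = 0$ provides a largest integer $n \ge 0$ with $x \in p^n C(K)$; writing $x = p^n w$, maximality of $n$ forces $w \notin pC(K)$, so $w$ is a unit by the preceding paragraph. Hence every nonzero element is of the form $p^n \cdot u$ with $u$ a unit and $n$ uniquely determined. The remaining assertions are then formal: if $x = p^a u$ and $y = p^b v$ are nonzero, then $xy = p^{a+b} uv$ is nonzero because $uv$ is a unit and $p$ is not a zero divisor, so $C(K)$ is an integral domain; and any nonzero ideal equals $p^n C(K)$ for the minimal valuation $n$ occurring in it, so every ideal is principal. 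Being a local principal ideal domain with nonzero maximal ideal $pC(K)$, the ring $C(K)$ is a discrete valuation ring with residue field $K$.

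It remains to record that the valuation ring has mixed characteristic. For this I would check that $C(K)$ has characteristic $0$: writing an integer $m = p^a m'$ with $p \nmid m'$, the image of $m'$ in $K$ is nonzero, so $m' \cdot 1$ is a unit and $m \cdot 1 = p^a (m' \cdot 1) \neq 0$ since $p$ is not a zero divisor. As the residue field $K$ has characteristic $p$, the ring $C(K)$ is indeed of mixed characteristic, completing the plan. I expect no serious obstacle beyond Theorem \ref{t1}, all of whose substance is already established.
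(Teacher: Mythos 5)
Your proof is correct and follows essentially the same route as the paper: both deduce everything from Theorem \ref{t1}, first showing $C(K)$ is local with maximal ideal $pC(K)$ by lifting units from the residue field using $p$-adic completeness, then using separatedness ($\bigcap_n p^n C(K)=0$) and the injectivity of multiplication by $p$ to write every nonzero element uniquely as $p^n u$ with $u$ a unit. The only cosmetic differences are that you invert $1+pz$ by a geometric series where the paper cites Serre's Lemma 1 (powers $(xy)^{p^\nu}\equiv 1 \bmod p^\nu$), and that you conclude via the local-PID characterization of discrete valuation rings (also verifying characteristic $0$ explicitly) where the paper constructs the valuation $v$ directly and extends it to the fraction field.
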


\begin{proof}
This is true for any strict $p$-ring $W$ with residue field $K$. The well known argument is as follows. By assumption $pW$ is a maximal ideal of $W$. For $x \in W \setminus pW$ choose $y \in W$ with $xy \equiv 1 \mod p$. Then $(xy)^{p^{\nu}} \equiv 1 \mod p^{\nu}$ by \cite{S}, II, \S\,4, Lemma 1. Hence $x \mod p^{\nu}$ is a unit in $W / p^{\nu} W$ and therefore $x = (x \mod p^{\nu})_{\nu \ge 0}$ is a unit in $W$. Hence the ring $W$ is local with unique maximal ideal $pW$. Since $W$ is separated i.e. $\bigcap^{\infty}_{\nu = 1} p^{\nu} W = 0$ it follows that for every $0 \neq a \in W$ there is a unique integer $v (a) \ge 0$ with $a = p^{v (a)} x$ and $x \in W \setminus p W$ i.e. $x \in W^*$. Since multiplication with $p$ is injective on $W$, it follows that $W$ is an integral domain. The map $v : W \setminus \{ 0 \} \to \Z$ satisfies $v (ab) = v (a) + v (b)$ by definition and $v (a+b) \ge \min (v (a) , v (b))$ because as seen above, an element of $W$ is a unit if and only if its reduction $\mod p$ is non-zero. The valuation $v$ extends uniquely to a discrete valuation on the quotient field $Q$ of $W$ with valuation ring $W$.
\end{proof}

\begin{rem}
In particular $C (K)$ is Noetherian while in general $\Z K$ is very far from being Noetherian.
\end{rem}
\medskip

As a topological additive group, $C (R)$ has another description which is sometimes useful. Let $\mathfrak{b}$ be a basis of the $\F_p$-algebra $R$ and let $\Z \eb$ be the free $\Z$-module with basis $\eb$. The inclusion $\eb \subset R$ induces an additive homomorphism
\[
\Z \eb \hookrightarrow \Z R \longrightarrow C (R)
\]
and hence a map
\[
\widehat{\Z \eb} = \varprojlim_{n} \Z \eb / p^n \Z \eb \longrightarrow C (R) \; .
\]

\begin{proposition} \label{t3a}
If $R$ is a perfect $\F_p$-algebra, the map $\widehat{\Z \eb} \to C (R)$ is a topological isomorphism of additive groups. In particular any inclusion $R_1 \hookrightarrow R_2$ resp. surjection $R_1 \twoheadrightarrow R_2$ of perfect $\F_p$-algebras induces an inclusion $C (R_1) \hookrightarrow C (R_2)$ resp. surjection $C (R_1) \twoheadrightarrow C (R_2)$ with a continuous additive splitting.
\end{proposition}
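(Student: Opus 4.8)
The plan is to exploit that both $\widehat{\Z\eb}$ and $C(R)$ are Hausdorff and complete for the $p$-adic filtration, and to reduce the statement to an isomorphism on associated graded groups. First I would record the relevant structure on each side. By Theorem \ref{t1} the topology on $C(R)$ is the $p$-adic one, since $\widehat{I^n} = p^n C(R)$, and by the remark following Theorem \ref{t1} multiplication by $p^n$ induces isomorphisms $C(R)/pC(R) \silo p^n C(R)/p^{n+1}C(R)$ with every term canonically equal to $R$. On the other side $\widehat{\Z\eb}$ is the $p$-adic completion of the free $\Z$-module $\Z\eb$; I would use the standard facts that such a completion is $p$-torsion free and satisfies $\widehat{\Z\eb}/p^n\widehat{\Z\eb} = \Z\eb/p^n\Z\eb$, so that its graded pieces $p^n\widehat{\Z\eb}/p^{n+1}\widehat{\Z\eb}$ are all canonically $\Z\eb/p\Z\eb = \F_p\eb$.

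The map $\phi : \widehat{\Z\eb} \to C(R)$ is additive and commutes with multiplication by $p$, hence it respects the $p$-adic filtrations and is continuous. To compute its effect on graded groups I would first treat degree zero: the induced map $\Z\eb/p\Z\eb \to C(R)/pC(R) = R$ sends a basis element $b$ to the class of $[b]$, i.e. to $b \in R$, so that, $\eb$ being an $\F_p$-basis of $R$, it is exactly the basis isomorphism $\F_p\eb \silo R$. Because $\phi$ commutes with multiplication by $p$ and $p$ is injective on both groups, the degree-$n$ graded map $\mathrm{gr}^n\phi$ fits into a commutative square together with $\mathrm{gr}^0\phi$ and the two (bijective) multiplication-by-$p^n$ maps, and is therefore an isomorphism for every $n$. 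I would then invoke the standard lemma that a filtered homomorphism between complete Hausdorff filtered abelian groups inducing isomorphisms on all graded quotients is a topological isomorphism; its proof is the usual pair of successive-approximation arguments, injectivity from separatedness and surjectivity from completeness. This settles the first assertion.

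For the ``in particular'' clause I would produce the maps and their splittings by choosing compatible bases. Given an inclusion $R_1 \hookrightarrow R_2$, extend a basis $\eb_1$ of $R_1$ to a basis $\eb_2 \supseteq \eb_1$ of $R_2$; the split inclusion $\Z\eb_1 \hookrightarrow \Z\eb_2$ of free modules completes to a split injection, and the square identifying $\widehat{\Z\eb_i}$ with $C(R_i)$ commutes with the functorial map $C(R_1)\to C(R_2)$ since both composites send $b \in \eb_1$ to $[b] \in C(R_2)$. This yields the split injection $C(R_1) \hookrightarrow C(R_2)$ with a continuous additive splitting. For a surjection $\pi : R_1 \twoheadrightarrow R_2$, choose a basis $\eb_2$ of $R_2$, lift each $b \in \eb_2$ to some $\tilde b \in R_1$, and let $s : \widehat{\Z\eb_2} \to C(R_1)$ be the continuous additive map induced by $b \mapsto [\tilde b]$. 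One checks that $C(\pi)\verk s$ is induced by $b \mapsto [\pi(\tilde b)] = [b]$, hence equals the identity of $C(R_2) = \widehat{\Z\eb_2}$; this simultaneously proves that $C(\pi)$ is surjective and exhibits the desired continuous additive splitting.

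The main obstacle is not analytic but organizational. One must set up the graded-pieces lemma with the correct common $p$-adic filtration on both sides, drawing on Theorem \ref{t1} and the subsequent remark for the $C(R)$ side, and then verify that the naturality squares for the functorial maps commute in the ``in particular'' part. The delicate point is the surjective case, where the monoid-algebra map $\Z R_1 \to \Z R_2$ does \emph{not} restrict to a projection of the chosen bases and must instead be handled through the section $s$ as above.
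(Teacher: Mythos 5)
Your argument is correct and, for the main assertion, it is essentially the paper's proof in different packaging: both reduce to the base case $\Z \eb / p \Z \eb = \F_p \eb = R = C(R)/pC(R)$ and bootstrap through the $p$-adic filtration using Theorem \ref{t1} (the topology on $C(R)$ is $p$-adic and $p$ is not a zero divisor there). The paper proves by induction on $n$ that each finite-level map $\alpha_n : \Z \eb / p^n \Z \eb \to C(R)/p^n C(R)$ is an isomorphism, applying the five lemma to the map between the short exact sequences $0 \to M/pM \xrightarrow{p^n} M/p^{n+1}M \to M/p^n M \to 0$ for $M = \Z \eb$ and $M = C(R)$, and then passes to inverse limits; you instead verify isomorphisms on the associated graded quotients and invoke the standard lemma on complete Hausdorff filtered abelian groups. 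These two mechanisms are interchangeable --- your graded squares are, up to identifying $p^n M / p^{n+1} M$ with the kernel of $M/p^{n+1}M \to M/p^n M$, the left-hand squares of the paper's diagrams --- so the route is not genuinely different. Where you do add content is the ``in particular'' clause, which the paper dismisses with ``the remaining assertions follow immediately'': your basis-extension argument for inclusions (with the retraction surviving completion), and especially your treatment of the surjective case via the continuous section $s$ induced by lifting a basis of $R_2$ to $R_1$ --- correctly noting that $\Z R_1 \to \Z R_2$ does not restrict to a projection of the chosen free modules, so one cannot simply complete a basis projection --- supply exactly the details the paper leaves to the reader, and they are sound.
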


\begin{proof}
By theorem \ref{t1} we have to show that for each $n \ge 1$ the additive map 
\[
\alpha_n : \Z \eb / p^n \Z \eb \longrightarrow C (R) / p^n C (R)
\]
is an isomorphism. For $n = 1$ this is true because $\F_p \eb = R$ since $\eb$ is an $\F_p$-basis for $R$. Now assume that $\alpha_n$ is an isomorphism and consider the commutative diagram
\[
\xymatrix{
0 \ar[r] & \Z \eb / p \Z \eb \ar[d]^{\alpha_1} \ar[r]^{p^n} & \Z \eb / p^{n+1} \Z \eb \ar[d]^{\alpha_{n+1}} \ar[r] & \Z \eb / p^n \Z \eb \ar[d]^{\alpha_n} \ar[r] & 0 \\
0 \ar[r] & C (R) / p C (R) \ar[r]^{p^n} & C (R) / p^{n+1} C (R) \ar[r] & C (R) / p^n C (R) \ar[r] & 0
}
\]
The upper sequence is exact and because of theorem \ref{t1} the lower sequence is exact as well. Hence $\alpha_{n+1}$ is an isomorphism. The remaining assertions follow immediately.
\end{proof}

\begin{rem}
If the basis $\eb$ happens to be closed under multiplication then $\Z \eb$ is a ring and $\widehat{\Z \eb} \to C (R)$ an isomorphism of rings. This is the case in the following example. The perfect $\F_p$-algebra $R = \F_p [t^{p^{-\infty}}_1 , \ldots , t^{p^{-\infty}}_d]$ has a basis $\eb$ consisting of monomials. This basis is multiplicatively closed and hence $C (R)$ is the $p$-adic completion of the monoid algebra $\Z \eb$ i.e. of the algebra $\Z [t^{p^{-\infty}}_1 , \ldots , t^{p^{-\infty}}_d]$.
\end{rem}

\begin{proposition}
\label{t4}
Let $A$ be a $p$-ring with perfect residue algebra $R$ as above. Then there is a unique homomorphism of rings $\hat{\alpha} : C (R) \to A$ such that the following diagram commutes:
\begin{equation}
\label{eq:9}
\xymatrix{
C (R) \ar[rr]^{\hat{\alpha}} \ar[dr]_{\pi} & & A \ar[dl]^{\pi_A} \\
 & R &
}
\end{equation}
\end{proposition}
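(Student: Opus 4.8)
The plan is to build $\hat{\alpha}$ out of a ring homomorphism $\alpha : \Z R \to A$ on the monoid algebra and then to extend $\alpha$ to the $I$-adic completion. Since ring homomorphisms $\Z R \to A$ correspond to multiplicative maps $R \to A$ carrying $1$ to $1$, the first and decisive step is to produce a multiplicative section $\tau : R \to A$ of $\pi_A$ (a Teichm\"uller lift). Given $r \in R$, I would use perfectness of $R$ to form the roots $r^{p^{-n}}$, choose arbitrary lifts $a_n \in A$ with $\pi_A (a_n) = r^{p^{-n}}$, and set $\tau (r) = \lim_n a_n^{p^n}$, the limit being taken in the complete ring $A$.

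The technical engine is the elementary congruence lemma for $p$-rings: if $x \equiv y \mod \ea_m$ then $x^p \equiv y^p \mod \ea_{m+1}$ (this follows from $\ea_i \ea_j \subset \ea_{i+j}$ and $p \in \ea_1$ upon expanding $(x + (y-x))^p$, and is \cite{S}, II, \S\,4, Lemma 1). Iterating it gives $x \equiv y \mod \ea_1 \Rightarrow x^{p^n} \equiv y^{p^n} \mod \ea_{n+1}$. Applying this to the congruence $a_{n+1}^p \equiv a_n \mod \ea_1$ shows the sequence $(a_n^{p^n})$ is Cauchy, so $\tau (r)$ exists; the same estimate shows the value is independent of the chosen lifts, and running it on products $a_n b_n$ gives $\tau (rs) = \tau (r) \tau (s)$. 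I expect this verification --- that $\tau$ is well defined and multiplicative --- to be the \emph{main obstacle}, since it is here that perfectness of $R$ and completeness of $A$ are genuinely used.

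Once $\tau$ is in hand the rest is formal. The induced ring homomorphism $\alpha : \Z R \to A$, $\sum n_r [r] \mapsto \sum n_r \tau (r)$, satisfies $\pi_A \circ \alpha = \pi$, so $\alpha (I) \subset \ker \pi_A = \ea_1$ and therefore $\alpha (I^n) \subset \ea_1^n \subset \ea_n$ for all $n \ge 1$. Thus $\alpha$ is continuous, induces maps $\Z R / I^n \to A / \ea_n$, and --- since $A$ is Hausdorff and complete --- passes to the inverse limit to give a continuous ring homomorphism
\[
\hat{\alpha} : C (R) = \varprojlim_n \Z R / I^n \longrightarrow \varprojlim_n A / \ea_n = A \; ,
\]
with $\pi_A \circ \hat{\alpha} = \pi$ because this already holds on the dense image of $\Z R$; hence diagram \eqref{eq:9} commutes.

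For uniqueness, let $\beta : C (R) \to A$ be any ring homomorphism with $\pi_A \circ \beta = \pi$. Because $C (R)$ is a strict $p$-ring by Theorem \ref{t1}, its topology is given by the ideals $p^n C (R)$; since $p \in \ea_1$ in $A$ we have $\beta (p^n C (R)) = p^n \beta (C (R)) \subset p^n A \subset \ea_n$, so $\beta$ is automatically continuous. Both $\beta$ and $\hat{\alpha}$ are then determined by their restrictions to the dense subset $\Z R$, i.e. by the multiplicative sections $\beta \circ \omega$ and $\hat{\alpha} \circ \omega = \tau$ of $\pi_A$. The congruence lemma applied once more --- writing $r = (r^{p^{-n}})^{p^n}$ and noting the two sections agree mod $\ea_1$ --- shows that any two multiplicative sections of $\pi_A$ coincide mod $\ea_{n+1}$ for every $n$, hence are equal since $A$ is Hausdorff. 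Therefore $\beta \circ \omega = \hat{\alpha} \circ \omega$, so $\beta = \hat{\alpha}$ on $\Z R$ and, by continuity, everywhere.
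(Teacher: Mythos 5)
Your proof is correct and follows essentially the same route as the paper: produce the multiplicative (Teichm\"uller) section $\tau : R \to A$ of $\pi_A$, let it induce $\alpha : \Z R \to A$ via the universal property of the monoid algebra, and use $\alpha(I^n) \subset \ea_1^n \subset \ea_n$ to pass to the $I$-adic completion. The only differences are matters of detail rather than of route: the paper simply cites Serre (II, \S\,4, Proposition 8) for the existence and uniqueness of the multiplicative section, which you prove from scratch by the standard limit argument, and your uniqueness step makes explicit (via strictness of $C(R)$ from Theorem \ref{t1}, legitimately available at this point) the automatic continuity of an arbitrary competitor $\beta$, a point the paper compresses into the phrase ``automatically continuous''.
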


\begin{rem}
This is true for any strict $p$-ring instead of $C (R)$, c.f. \cite{S}, II, \S\,5, Proposition 10. However in our case the argument is particularly simple and we do not even have to know that $C (R)$ is strict.
\end{rem}

\begin{proof}
Since $A$ is a $p$-ring, there is a unique multiplicative section $\alpha_0 : R \to A$ of $\pi_A$, c.f. \cite{S}, II, \S\,4, Proposition 8.
Hence there is a unique ring homomorphism $\alpha : \Z R \to A$ such that the diagram
\[
\xymatrix{
\Z R \ar[rr]^{\alpha} \ar[dr]_{\pi} & & A \ar[dl]^{\pi_A} \\
 & R &
}
\]
commutes. Since $\alpha (I) \subset \ea_1$ we have $\alpha (I^{\nu}) \subset \ea^{\nu}_1 \subset \ea_{\nu}$ and therefore $\alpha$ extends to a unique and automatically continuous homomorphism $\hat{\alpha} : C (R) \to A$ such that \eqref{eq:9} commutes.
\end{proof}

\begin{remark}
\label{t5}
\em As we saw above it is immediate that $C (R)$ is a $p$-ring with residue algebra $R$. Showing directly that $C (R)$ is a strict $p$-ring required some thought. If one already knows that there is a strict $p$-ring $W$ with residue algebra $R$, then it is easy to see that $C (R)$ is isomorphic to $W$ and hence strict. Here is the argument:\\
The universal property of strict $p$-rings \cite{S} II \S\,5 Proposition 10 gives us a unique homomorphism $\beta : W \to C (R)$ such that the diagram
\[
\xymatrix{
W \ar[rr]^{\beta} \ar[dr] & & C (R) \ar[dl]^{\pi} \\
& R &
}
\]
commutes. On the other hand by proposition \ref{t4} there is a unique homomorphism $\hat{\alpha} : C (R) \to W$ such that
\[
\xymatrix{
C (R) \ar[rr]^{\hat{\alpha}} \ar[dr]_{\pi} & & W \ar[dl] \\
& R &
}
\]
commutes. The map $\alpha \verk \beta$ is the identity on $W$ because of the universal property for the strict $p$-ring $W$. The map $\beta \verk \alpha$ is the identity on $C (R)$ by proposition \ref{t4} because $C (R)$ is a $p$-ring. It follows that $C (R) \cong W$ is a strict $p$-ring. An equally simple proof may be given by using the characterization of the triple $(W (R) , R \hookrightarrow W (R) , \pi : W (R) \to R)$ in \cite{CC} Proposition 3.1 which is based on \cite{F} Theorem 1.2.1. 
\end{remark}

From the preceeding remark we get the following corollary:

\begin{cor} \label{t6}
Let $W_n (R)$ be the truncated ($p$-typical) Witt ring of the perfect $\F_p$-algebra $R$. There is a unique homomorphism of rings $\alpha_n : \Z R / I^n \to W_n (R)$ inducing the standard multiplicative embedding $R \hookrightarrow W_n (R)$ and making the following diagram commute
\[
\xymatrix{
\Z R / I^n \ar[rr]^{\overset{\scriptstyle \alpha_n}{\sim}} \ar[dr] & & W_n (R) \ar[dl]\\
 & R &
}
\]
Moreover, $\alpha_n$ is an isomorphism.
\end{cor}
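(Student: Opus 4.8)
The plan is to obtain $\alpha_n$ by reducing the isomorphism $C(R)\silo W(R)$ of Remark \ref{t5} modulo $p^n$, and to derive uniqueness from the observation that $\Z R/I^n$ is generated as a ring by the classes $[r]$.

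First I would identify both sides as quotients of strict $p$-rings. The canonical projection $C(R)=\varprojlim_\nu \Z R/I^\nu \to \Z R/I^n$ is surjective, because the transition maps $\Z R/I^{\nu+1}\to \Z R/I^\nu$ are surjective; its kernel is precisely $\widehat{I^n}$, since a compatible sequence dies in $\Z R/I^n$ exactly when all its components of index $\nu\ge n$ lie in $I^n/I^\nu$. By Theorem \ref{t1} we have $\widehat{I^n}=p^nC(R)$, so $\Z R/I^n \silo C(R)/p^nC(R)$. On the target side, $R$ being perfect the Frobenius $F$ is bijective and $V=pF^{-1}$, whence $\Imm V^n=p^nW(R)$ and therefore $W_n(R)=W(R)/V^nW(R)=W(R)/p^nW(R)$.

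Next I would invoke Remark \ref{t5}: the isomorphism $C(R)\silo W(R)$ is a morphism of $p$-rings over $R$, so it carries $p^nC(R)$ onto $p^nW(R)$ and, being compatible with the projections to $R$, it sends $\omega(r)$ (the image of $[r]$, which is the unique multiplicative lift of $r$) to the standard Teichm\"uller representative. Passing to the quotients modulo $p^n$ and composing with the two identifications above produces a ring isomorphism $\Z R/I^n\silo W_n(R)$ over $R$ that sends the class of $[r]$ to the standard multiplicative representative of $r$; this is the required $\alpha_n$, and it is an isomorphism by construction.

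Finally, for uniqueness I would note that $\Z R/I^n$ is generated as a ring by the classes of the $[r]$, $r\in R$. Any ring homomorphism $\Z R/I^n\to W_n(R)$ inducing the standard multiplicative embedding is forced to send the class of $[r]$ to the Teichm\"uller representative of $r$, hence is determined on a generating set and must coincide with $\alpha_n$. I do not expect a real obstacle: the only points that need a moment's care are the identification $\ker\!\big(C(R)\to \Z R/I^n\big)=\widehat{I^n}$ and the equality $\Imm V^n=p^nW(R)$, both of which are immediate from the material already developed.
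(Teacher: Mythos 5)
Your proposal is correct and takes essentially the same route as the paper: both reduce the isomorphism $\hat{\alpha} : C(R) \silo W(R)$ of Remark \ref{t5} modulo $p^n$, using the identifications $C(R)/p^n C(R) = C(R)/\widehat{I^n} = \Z R / I^n$ and $W(R)/p^n W(R) = W_n(R)$, and both derive uniqueness from the fact that a ring homomorphism out of $\Z R / I^n$ inducing the standard multiplicative embedding is forced on the classes $[r]$, which span the ring. The only difference is that you spell out details the paper asserts without comment (the kernel of $C(R) \to \Z R/I^n$, the equality $\Imm V^n = p^n W(R)$, and the matching of Teichm\"uller lifts via uniqueness of the multiplicative section), which is harmless.
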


\begin{proof}
Let $W (R)$ be the $p$-typical Witt ring of $R$. According to remark \ref{t5} there is a commutative diagram
\[
\xymatrix{
C (R) \ar[rr]^{\overset{\scriptstyle \hat{\alpha}}{\sim}} \ar[dr] & & W (R) \ar[dl]\\
 & R &
}
\]
Reducing  $\mod p^n$ and noting that $W (R) / p^n W (R) = W_n (R)$ and
\[
C (R) / p^n C (R) = C (R) / \widehat{I^n} = \Z R / I^n
\]
we get an isomorphism $\alpha_n$ as desired. There is a unique ring homomorphism $\alpha : \Z R \to W_n (R)$ prolonging the multiplicative embedding $R \hookrightarrow W_n (R)$. Hence $\alpha_n$ is uniquely determined.
\end{proof}

As a set $W_n (R)$ is $R^n$. Addition and multiplication are given by certain universal polynomials in $2n$ variables over $\Z$. We now describe the isomorphism $\alpha_2$. Note that \eqref{eq:4} and \eqref{eq:5} imply that $\delta$ induces a (non-additive) map
\[
\overline{\delta} : \Z R / I^2 \longrightarrow \Z R / I = R \; .
\]
We also have the ring homomorphism of reduction $\pi : \Z R / I^2 \to \Z R / I = R$.

\begin{proposition}
\label{t7}
The isomorphism
\[
\alpha_2 : \Z R / I^2 \xrightarrow{\sim} W_2 (R) = R^2
\]
is given by the map $(\pi , \overline{\delta})$.
\end{proposition}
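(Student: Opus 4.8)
The plan is to exploit the uniqueness built into Corollary \ref{t6}: since $\alpha_2$ is the \emph{unique} ring homomorphism $\Z R / I^2 \to W_2 (R)$ prolonging the Teichm\"uller embedding $R \hookrightarrow W_2 (R)$, $r \mapsto (r,0)$, it suffices to check that the map $(\pi , \overline{\delta})$ is a ring homomorphism and that it sends each generator $[r]$ to $(r,0)$. First I would recall the standard degree-two Witt formulas on $W_2 (R) = R^2$: for the sum, $s_0 = a_0 + b_0$ and $s_1 = a_1 + b_1 + \frac{1}{p}(a_0^p + b_0^p - (a_0+b_0)^p)$, and for the product, $p_0 = a_0 b_0$ and $p_1 = a_0^p b_1 + b_0^p a_1 + p a_1 b_1$, where the last term vanishes because $R$ has characteristic $p$.

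The first component is free of difficulty: $\pi$ is already a ring homomorphism, so it reproduces $s_0$ and $p_0$ verbatim. For the second component I would reduce the two basic identities for $\delta$ modulo $I$. Writing $a_0 = \pi (x)$, $a_1 = \overline{\delta}(x)$, $b_0 = \pi (y)$, $b_1 = \overline{\delta}(y)$ in $R$, equation \eqref{eq:2} reduces modulo $I$ to $\overline{\delta}(xy) = a_1 b_0^p + a_0^p b_1$, using that $\pi$ is multiplicative and that $F$ induces the Frobenius on $R$, so $\pi (F(y)) = \pi (y)^p$ and $\pi (x^p) = \pi (x)^p$; this is precisely $p_1$ in characteristic $p$. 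Likewise \eqref{eq:1} reduces to $\overline{\delta}(x+y) = a_1 + b_1 - \sum_{\nu=1}^{p-1} \frac{1}{p}{p \choose \nu} a_0^\nu b_0^{p-\nu}$, and the elementary identity $a_0^p + b_0^p - (a_0 + b_0)^p = - \sum_{\nu=1}^{p-1} {p \choose \nu} a_0^\nu b_0^{p-\nu}$ identifies this expression with $s_1$. Thus $(\pi , \overline{\delta})$ respects both operations, and since $\delta ([r]) = \frac{1}{p}(F([r]) - [r]^p) = \frac{1}{p}([r^p] - [r^p]) = 0$ because $r \mapsto [r]$ is multiplicative, the map sends $[r]$ to $(r,0)$ and in particular the unit $1 = [1]$ to $(1,0)$. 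The uniqueness in Corollary \ref{t6} then forces $(\pi , \overline{\delta}) = \alpha_2$.

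The only genuine work lies in the two reductions modulo $I$, and the step I expect to require the most care is the product formula: one must correctly pair the Frobenius twists coming from $F(y)$ and $x^p$ in \eqref{eq:2} with the twists $a_0^p$ and $b_0^p$ in the Witt polynomial $p_1$, and one must remember that the genuinely integral term $p a_1 b_1$ of the Witt product is annihilated by the characteristic of $R$ — without this the two expressions would not match. Everything else is formal, resting on the fact that $\pi$ is a ring homomorphism and that $\overline{\delta}$ is already known (from \eqref{eq:4} and \eqref{eq:5}) to be well defined on $\Z R / I^2$.
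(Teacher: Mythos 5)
Your proposal is correct and follows essentially the same route as the paper: the paper likewise verifies that $(\pi, \overline{\delta})$ restricts to the standard multiplicative embedding on $R$, checks the ring homomorphism property against the explicit degree-two Witt addition and multiplication formulas, and then invokes the uniqueness in Corollary \ref{t6}. Your write-up simply makes explicit the reductions of \eqref{eq:1} and \eqref{eq:2} modulo $I$ (including the vanishing of the $p a_1 b_1$ term in characteristic $p$ and the computation $\delta([r])=0$) that the paper compresses into ``one checks.''
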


\begin{proof}
The composition $R \to \Z R / I^2 \to W_2 (R) = R^2$ is the standard multiplicative embedding. One checks that $\alpha_2$ is a ring homomorphism using the formulas for addition and multiplication on $W_2 (R) = R^2$:
\[
(x,y) + (x', y') = \Big( x + x' , y + y' - \frac{1}{p} \sum^{p-1}_{\nu = 1} {p \choose \nu} x^{\nu} x'^{p-\nu} \Big)
\]
and
\[
(x,y) \cdot (x' , y') = (xx' , x^{'p} y + y' x^p + pyy') \; .
\]
Using corollary \ref{t6} the assertion follows.
\end{proof}
\begin{rem}
With respect to the ordinary $R$-module structure on $R^2$ the map $\alpha_2$ is non-linear. Hence the simple addition and multiplication on $\Z R / I^2$ become something non-obvious on $R^2$. We have not tried to describe $\alpha_n$ for $n \ge 3$ by explicit formulas. 
\end{rem}
\bigskip

It is interesting to compare the $I$-adic completion $C(R)$ of $\Z
R$ with its $p$-adic completion i.e. the completion with respect to
powers of the ideal $p\Z R$. Lemma 2 b) shows that the projective
system $(I^n/p^n\Z R)_n$ satisfies the Mittag-Leffler condition.
Therefore we obtain the following exact sequence
\begin{equation}\label{xyz} 0\quad\to\quad\mathop{\lim}\limits_{
{\scriptstyle\longleftarrow} } I^n/p^n\Z R \quad \to
\quad\mathop{\lim}\limits_{ {\scriptstyle\longleftarrow} } \Z
R/p^n\Z R \quad \to \quad\mathop{\lim}\limits_{
{\scriptstyle\longleftarrow} } \Z R/I^n \quad \to \quad
0
\end{equation}
which describes the kernel of the natural map from the $p$-adic
completion to $C(R)$.

Now, if $R$ is a finite perfect $\F _p$-algebra, then the $p$-adic
completion of $\Z R$ is $\Z_p R$ the monoid algebra of $R$ over $\Z_p$ and $C(R)$ has an
instructive description as a complete subring of $\Z_p R$.

\begin{proposition} \label{t8}
Assume that $R$ is a finite perfect $\F _p$-algebra. Then there is
an idempotent $e$ in $\Z_p R$ such that $e(\Z_pR)$ is the kernel of
the natural map $\Z_pR\to C(R)$ and such that $(1-e)\Z_pR$ is
topologically isomorphic to $C(R)$.
\end{proposition}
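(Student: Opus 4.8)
The plan is to exploit the fact that, for finite $R$, the monoid algebra $\Z_p R$ is a \emph{finite étale} $\Z_p$-algebra; this is what forces the desired idempotent to exist integrally. First I would pin down the structure of $R$. A finite perfect $\F_p$-algebra is reduced, since injectivity of the Frobenius kills every nilpotent, so it is a finite product of finite fields $R = \prod_i \F_{q_i}$ with $q_i = p^{f_i}$. As the multiplicative monoid of a product is the product of the multiplicative monoids, we obtain $\Z_p R = \bigotimes_i \Z_p[(\F_{q_i},\cdot)]$. In a single factor the absorbing element $0\in \F_q$ gives an idempotent $[0]$ with $[0][r]=[0]$, whence $\Z_p[(\F_q,\cdot)] = \Z_p[0]\times(1-[0])\Z_p[(\F_q,\cdot)]$, the second factor being the group algebra $\Z_p[\F_q^*]$ via $g\mapsto [g]-[0]$. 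Since $|\F_q^*| = q-1$ is prime to $p$, this group algebra is étale over $\Z_p$; hence each $\Z_p[(\F_q,\cdot)]$ is finite étale over $\Z_p$, and so is their tensor product $\Z_p R$. Consequently $\Z_p R \cong \prod_j W(\kappa_j)$ is a finite product of complete discrete valuation rings, unramified over $\Z_p$ with uniformiser $p$.

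The target has the same shape: by Theorem \ref{t1} and Remark \ref{t5} we have $C(R)\cong W(R)$, and since the Witt functor preserves finite products, $C(R)\cong\prod_i W(\F_{q_i})$. The relevant map is the surjection $\phi\colon \Z_p R \twoheadrightarrow C(R)$ whose kernel is $K=\varprojlim_n I^n/p^n\Z R$, as supplied by the exact sequence \eqref{xyz}.

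The heart of the matter is to see that $\phi$ is cut out by an idempotent. Dually it is a closed immersion $\mathrm{Spec}\,C(R)\hookrightarrow\mathrm{Spec}\,\Z_p R$ between two schemes finite étale over $\mathrm{Spec}\,\Z_p$. By the cancellation property for étale morphisms (both $\Z_p R$ and $C(R)$ are étale over $\Z_p$, in particular $\Z_p R$ is unramified over $\Z_p$) this closed immersion is itself étale, and an étale closed immersion is an open immersion; thus $\mathrm{Spec}\,C(R)$ is an open-and-closed subscheme, which is exactly the datum of an idempotent $e\in\Z_p R$ with $e\,\Z_p R = K$ and $(1-e)\Z_p R\xrightarrow{\sim}C(R)$. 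Concretely, since both rings are products of local blocks, each composite $\Z_p R\to W(\F_{q_i})$ factors through a single block (a local ring has only trivial idempotents) and, by surjectivity, identifies that block isomorphically with $W(\F_{q_i})$; surjectivity onto the whole product forces distinct $i$ to use distinct blocks, so $\phi$ is projection onto the product of the blocks so used, and $e$ is the unit of the complementary blocks. The isomorphism $(1-e)\Z_p R\cong C(R)$ is automatically topological, both topologies being the $p$-adic one by Theorem \ref{t1}.

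I expect the genuine obstacle to be the integrality of this idempotent. Rationally the splitting is trivial: $\mathbb{Q}_p R$ is a finite product of fields and $\mathbb{Q}_p R \to C(R)[1/p]$ is a projection, furnishing an idempotent in $\mathbb{Q}_p R$; but a priori nothing guarantees that it lies in $\Z_p R$. The étale-ness of $\Z_p R$, which ultimately rests on each $q_i-1$ being prime to $p$, is precisely what makes $\Z_p R$ its own maximal order, so that every idempotent of $\mathbb{Q}_p R$ already lives in $\Z_p R$. Establishing this étale-ness — equivalently, that $\Z_p R$ is a product of unramified discrete valuation rings — is the step I would treat most carefully.
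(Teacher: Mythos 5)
Your proof is correct, but it takes a genuinely different route from the paper's. You classify everything up front: $R$ is reduced, hence a product of finite fields $\prod_i \F_{q_i}$; the splitting $\Z_p[(\F_q,\cdot)]\cong \Z_p\times \Z_p[\F_q^\times]$ off the absorbing idempotent $[0]$, together with $p\nmid q-1$, makes $\Z_p R$ finite \'etale over $\Z_p$, i.e.\ a product of unramified complete discrete valuation rings; and $C(R)\cong W(R)\cong \prod_i W(\F_{q_i})$ via remark \ref{t5}. The idempotent then comes from the block decomposition of a surjection between such products (equivalently, your ``\'etale closed immersion is an open immersion''), and this is all sound --- including the point you flag as the crux, that \'etaleness is what puts the rational idempotent integrally inside $\Z_p R$. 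The paper's proof is quite different and more elementary: it never decomposes $R$, never invokes $W(R)$ or \'etale theory, and works instead with the finite Artin quotients $\Z R/p^n\Z R$. Lemma \ref{t2}\,b) (in the form $I^n = I^{2n}+p^n\Z R$) shows the image $A_n$ of $I^n$ is an idempotent ideal; the structure theorem for Artin rings produces a unique idempotent generator $e_n$ of $A_n$ at each finite level; Lemma \ref{t2}\,b) again gives the compatibility $e_{n+1}\mapsto e_n$, so $e=\varprojlim e_n$ exists in $\Z_p R$; and the topological isomorphism is extracted from the sequence \eqref{xyz} by compactness (a continuous bijection of compact rings is a homeomorphism) rather than by matching $p$-adic topologies. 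What each buys: your argument yields the full structural picture --- it identifies $\Z_p R$ and exactly which blocks survive, which illuminates the paper's closing remark computing $e$ for $R=\F_p$ via the Teichm\"uller character --- at the cost of importing the classification of finite perfect algebras, the isomorphism $C(R)\cong W(R)$, and \'etale cancellation; the paper's argument is self-contained (only Lemma \ref{t2} and \eqref{xyz}), constructive at each finite level, and makes visible that the idempotence ultimately comes from $I^n=I^{2n}+p^n\Z R$ rather than from field-theoretic structure. Two steps you should make explicit if you write this up: the injectivity of each block map $B_{j(i)}\to W(\F_{q_i})$ (the kernel, an ideal $(p^k)$ of a discrete valuation ring, would force $p$-torsion on the torsion-free target), and the standard fact that $W$ preserves finite products.
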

\begin{proof}
For each $n$, the quotient $\Z R/p^n\Z R$ is finite and in
particular an Artin ring (descending chains of ideals become
stationary). Using Lemma 2 b), we see that the image $A_n$ of $I^n$
in $\Z R/p^n\Z R$ is an ideal such that $A_n^2=A_n$.

According to the structure theorem for Artin rings (see e.g.
\cite{AM}, Theorem 8.7), $\Z R/p^n\Z R$ is (uniquely) a finite
direct product $\prod_i B_i$ of local Artin rings $B_i$. Any
idempotent ideal in a local Artin ring $B$ is either $0$ or equal to
$B$  since the maximal ideal in $B$ is nilpotent,
\cite{AM}, 8.2 and 8.4. Therefore the projection of $A_n$ to
any of the components $B_i$ is either $0$ or $B_i$. If we let $e_n$
denote the sum of the identity elements of the $B_i$ in which the
component of $A_n$ is non-zero we get an idempotent $e_n$ in $\Z
R/p^n\Z R$ such that $e_n (\Z R/p^n\Z R) = A_n$ ($e_n$ is a unit
element for $A_n$ and therefore uniquely determined).

Since, by Lemma 2 b), the image of $I^{n+1}$ in $\Z R/p^{n+1}\Z R$
maps surjectively to the image of $I^{n}$ in $\Z R/p^{n}\Z R$ under
the natural map, the sequence $(e_n)$ defines an element $e$ in
$\Z_pR = \mathop{\lim}\limits_{ {\scriptstyle\longleftarrow} } \Z
R/p^n\Z R$. By construction it is an idempotent in $A =
\quad\mathop{\lim}\limits_{ {\scriptstyle\longleftarrow} } A_n=
\quad\mathop{\lim}\limits_{ {\scriptstyle\longleftarrow} } I^n/p^n\Z
R$ such that $ex=x$ for each $x$ in $A$. It follows that $A=e(\Z_p
R)$.

The exact sequence (\ref{xyz}) then shows that the map $(1-e)\Z_pR
\to C(R)$ is a continuous bijective homomorphism between compact
rings and therefore a topological isomorphism.
\end{proof}
\begin{rem} 
The proof shows that $A=e(\Z_pR)$ in the preceding
proposition is a unital ring which is the projective limit of a
system $(A_n)$ of unital rings with unital transition maps. In the case $R = \F_p$ looking at the canonical decomposition of $\Z_p R$ under the action of $\F^{\times}_p$ we see that $e$ has the following explicit description
\[
1 - e = (p-1)^{-1} \sum_{r \in \F^{\times}_p} \omega (r)^{-1} [r] \quad \text{in} \; \Z_p R \; .
\]
Here $\omega$ is the Teichm\"uller character $\omega : \F^{\times}_p \to \Z^{\times}_p$.
\end{rem}


\end{document}